\documentclass[12pt,twoside,reqno]{amsart}

%%%%%%%%%%%%%%%%%

\usepackage[titletoc,toc,page]{appendix}
\usepackage{mathtools}

\usepackage{amssymb,amsmath,amstext,amsthm,amsfonts,xcolor, amsthm,amscd}

\usepackage{dsfont}

\usepackage{textgreek}

\usepackage[ansinew]{inputenc} 
\usepackage{graphicx}
\usepackage[mathscr]{eucal}

\usepackage{hyperref, enumerate}

%%%%Square bullets

%%%%%%%%%%%%%%%%

\newcommand{\R}{\mathbb{R}}
\newcommand{\C}{\mathbb{C}}
\newcommand{\N}{\mathbb{N}}
\newcommand{\Z}{\mathbb{Z}}

\newcommand{\T}{\mathbb{T}}
\newcommand{\Su}{\mathbb{S}}
\newcommand{\SL}{{\rm SL}}
\newcommand{\GL}{{\rm GL}}

\newcommand{\Mat}{{\rm Mat}}

 % smaller absolute value
\newcommand{\abs}[1]{\bigl| #1 \bigr|} % absolute value
 % big absolute value
\newcommand{\norm}[1]{\lVert#1\rVert} % norm
 % norm
 % norm
 %big norm
\newcommand{\normtwo}[1]{% Peter Grill norm @tex.stackexchange.com
{\left\vert\kern-0.25ex\left\vert\kern-0.25ex\left\vert #1 
    \right\vert\kern-0.25ex\right\vert\kern-0.25ex\right\vert} }

 % norm

 % average
 % integer part

 % modified function
   % lines parallel with axes

 % base dynamics map

\newcommand{\ep}{\epsilon}

\newcommand{\om}{\omega}

\newcommand{\qpcocycle}[1]{C_r^{\om} ({#1}, \Mat (m, \C))}

  %%%  intervals of scales

  %%%  Lyapunov Exponent Function
  %%% nth iteration of cocycle A
  %%% nth iteration of cocycle B

  %%% tau-gap Lyapunov exp scale n
   %%% tau-block Lyapunov exp scale n
   %%% any s-bloc Lyapunov exp scale n
   %%% any p_j-bloc Lyapunov exp scale n
   %%% the whole bloc Lyapunov exp scale n

   %%%      
   %%% the subharmonic extensions
   %%%

  %%% tau-gap Lyapunov exp 
   %%% tau-block Lyapunov exp
   %%% any s-bloc Lyapunov exp
   %%% any p_j-bloc Lyapunov exp
   %%% the whole Lyap block

\newcommand{\K}{\mathbb{K}}

\newcommand{\Gr}{{\rm Gr}}
\newcommand{\Pp}{\mathbb{P}}

%\newcommand{\ind}{\textbf{1}}

  %% tau s.v.formula
 %% tau s.v. product
  %% tau s.v. ratio
  %% tau gap pattern
  %% tau block Lyap
  %% tau gap Lyap

\newcommand{\id}{{\rm id}}
\newcommand{\F}{\mathbb{F}}

%%%%% Restricted to:
\newcommand\restr[2]{{% we make the whole thing an ordinary symbol
  \left.\kern-\nulldelimiterspace % automatically resize the bar with \right
  #1 % the function
  \vphantom{\big|} % pretend it's a little taller at normal size
  \right|_{#2} % this is the delimiter
  }}
%%%%%%%%%%%%%%

\theoremstyle{plain}
\newtheorem{theorem}{Theorem}
\newtheorem{proposition}{Proposition}
\newtheorem{corollary}{Corollary}

%\numberwithin{equation}[section]

\theoremstyle{remark}
\newtheorem{remark}{Remark}

\theoremstyle{definition}

                       %%% Size of deviation
  %%% Deviation function 
                       %%% Measure of deviation set in LDT
  %%%   Function measuring deviation set in LDT

%%%%%%%%%%%%%%%%%%%%%%%%%
%%%%%%%%%%%%%%%%%%%%%%%%%
%%%%%%%%%%%%%%%%%%%%%%%%%

\title[Topological obstructions]{Topological obstructions to dominated splitting for ergodic translations on the higher dimensional torus}

\date{}

\begin{document}

\author[P. Duarte]{Pedro Duarte}
\address{Departamento de Matem\'atica and CMAFCIO\\
Faculdade de Ci\^encias\\
Universidade de Lisboa\\
Portugal 
}
\email{pmduarte@fc.ul.pt}

\author[S. Klein]{Silvius Klein}
\address{Departamento de Matem\'atica, Pontif\'icia Universidade Cat\'olica do Rio de Janeiro, Brazil (PUC-Rio)
  }
\email{silviusk@mat.puc-rio.br}

\begin{abstract} 
Consider the space of analytic, quasi-periodic cocycles on the higher dimensional torus. 
We provide examples of cocycles with nontrivial Lyapunov spectrum, whose homotopy classes do not contain any cocycles satisfying the dominated splitting pro\-per\-ty.
This shows that the main result in the recent work ``Complex one-frequency cocycles'' by A. \'Avila, S. Jitomirskaya and C. Sadel does not hold in the higher dimensional torus setting. 
\end{abstract}

\maketitle

\section{Introduction and statements}\label{intro}
It is well known that the homotopy type may prevent a
con\-ti\-nuous linear cocycle over a base dynamical system from being uniformly hyper\-bolic.
In fact, for an $\SL_2(\R)$-valued cocycle over  a circle map,
M. Herman remarked that the topological degrees 
of the base map $T\colon \T\to\T$ and   of the matrix valued function $A\colon \T\to \SL_2(\R)$  provide topological obstructions to the uniform hyperbolicity of the cocycle. More precisely, this obstruction happens when $\deg(T)-1$ does not divide $\deg(A_p)$,
where for any $p\in\Pp(\R^2)$, $A_p\colon\T\to\Pp(\R^2)$ denotes the projective space induced map $A_p(x)=A(x)\,p$ (see~\cite{Viana-book} or~\cite{Bochi-Viana}). 

In sharp contrast with this,  A. \'Avila, S. Jitomirskaya and C. Sadel~\cite{AJS} recently proved that analytic  cocycles $A\colon \T\to\GL_m(\C)$  over irra\-tional translations on the {\em one} dimensional torus $\T$ are always approxi\-mated by cocycles with dominated splitting (a type of uniform projective hyperbolicity), provided the Oseledets filtration is non\-trivial. In particular, every homotopy class of such cocycles contains analytic cocycles with dominated splitting.

In dynamical systems, the  Bochi-Ma\~n\'e  dichotomy   refers to a generic (low regularity) dichotomy
between zero Lyapunov exponents and uniform hyperbolicity, or dominated splitting in higher dimensions. This dichotomy, proved by J. Bochi~\cite{Bochi}, was first announced by R. Ma\~n\'e in the context of $C^1$-area preserving  diffeomorphisms of a surface.
Later J. Bochi and M. Viana generalized it to $C^1$-volume preserving diffeomorphisms of any compact manifold~\cite{Bochi-Viana}. These works~\cite{Bochi,Bochi-Viana}
 also include versions of the dichotomy for classes of $C^0$-cocycles. 
Because the low regularity is essential here,
it is quite surprising that the same type of dichotomy can hold  in~\cite{AJS}  for a class of {\em analytic} cocyles.

\medskip

The purpose of this note is to show that the main result in the aforementioned paper~\cite{AJS} does not hold for cocycles over ergodic translations on the {\em higher} dimensional torus $\T^d$, $d \ge 2$. We obtain this by deve\-loping a simple homological obstruction to the existence of continuous invariant sections of the skew product map induced by the cocycle at the level of the Grassmannian space of a certain dimension.

A somewhat related topic is that of the regularity of the Lyapunov exponents under small perturbations of the cocycle in certain topo\-lo\-gical spaces of cocycles. In~\cite{AJS} the authors prove continuity of the Lyapunov exponents on the space $\qpcocycle{\T}$ of analytic cocycles\footnote{We regard functions on the torus  $\T = \R/\Z$ as $1$-periodic functions on the real line. Then $\qpcocycle{\T}$ is the space of functions $A \colon \T \to \Mat (m, \C)$ admitting a holomorphic extension to the complex strip $\abs{ \Im z} < r$, continuous up to the boundary and endowed with the uniform norm on the strip.}  over irrational translations on the one dimensional torus. Dominated splitting plays a crucial role in their proof, more precisely, the fact that if the Oseledets filtration of the cocycle $A (x)$ is nontrivial, then for small enough $\ep>0$, the complexified  cocycle $A (x+i y)$ has dominated splitting for a.e. $y$ with $\abs{y} < \ep$ (see~\cite[Lemma 4.1]{AJS}). As a consequence of our main result, the analogue of this statement for ergodic translations on the higher dimensional torus does not hold (see Remark~\ref{related to continuity LE}). However, in~\cite{DK-coposim} we established  by other means the continuity of the Lyapunov exponents for analytic cocycles over such translations.
 \medskip
 
 We now introduce the main concepts more formally.

Let $\K=\R$ or $\K=\C$ refer to either the real or the complex field. Let $\T^d = (\R/\Z)^d$ with $d \ge 2$ be the higher dimensional torus. 

A  continuous function
 $A\colon \T^d\to \GL_m(\K)$ and an ergodic  translation
$T:\T^d\to \T^d$ determine the skew-product map
$F\colon\T^d\times\K^m \to \T^d\times\K^m$,
$$F(x,v)=(T x, A(x) v) .$$ 

We call the new dynamical system $F$ a {\em linear cocycle} over the base transformation $T$. Its iterates are $F^{(n)} (x,v)=(T^n x, A^{(n)}(x) v)$, where
$A^{(n)}(x) := A (T^{n-1} x) \ldots A (Tx) \, A (x)$.

Since $T$ is usually fixed, we identify the linear cocycle $F$ with the matrix-valued function $A$, and its iterates $F^{(n)}$ with $A^{(n)}$. 

The {\em Lyapunov exponents} of a linear cocycle $A$ measure the average exponential rate of
growth of the iterates $A^{(n)} (x)$ along the invariant subspaces given by the Oseledets theorem. 

We say that a linear cocycle $A$ has {\em dominated splitting}  with respect to $\K$ (or that its Oseledets decomposition is dominated)
if there exists a continuous $F$-invariant decomposition $\K^m=E_1(x)\oplus \ldots \oplus E_l(x) $,
where $2\leq l \leq m$ and each $E_i$ is an $F$-invariant  continuous $\K$ sub-bundle of the trivial bundle $\T^d\times\K^m$ such that for some
$\lambda>1$, for  any $1\leq i < j \leq l$  and for any unit vectors $v_i\in E_i(x)$, $v_j\in E_j(x)$,
$$ \frac{ \norm{A^{(n)}(x) v_i} }{ \norm{A^{(n)}(x) v_j} } \geq \lambda^n \quad \text{ for all } \; n\in\N.  $$

In particular, as $l\ge2$, the Oseledets decomposition of $A$ is nontrivial (its components are proper subspaces of $\K^m$) so the Lyapunov exponents of $A$ are not all equal. 

For $\SL_2(\R)$-valued cocycles, the dominated splitting pro\-per\-ty is equivalent to uniform hyperbolicity.

Following the terminology in~\cite{AJS}, given $1\leq k<m$,
 we say that a linear cocycle
$A\colon \T^d \to \GL_m(\K)$ is {\em $k$-dominated}  if it admits a domi\-nated decomposition
$\K^m=E^+\oplus E^-$ with $\dim E^+ = k$ and where the Lyapunov exponents of $A\vert_{E^+}$ are strictly larger than all Lyapunov exponents of $A\vert_{E^-}$.

It is clear that  if the linear cocycle $A$ has the dominated spli\-tting  $\K^m=E_1(x)\oplus \ldots \oplus E_l(x) $, then $A$ is $k$-dominated for every dimension $k = \dim (E_1) + \ldots + \dim (E_i)$ with $1\le i \le l-1$.

\medskip

We are now ready to formulate the main result of this paper.

\begin{theorem}
\label{teor main}
Given integers $d\geq 2$ and $1\leq k <m$
there exist analytic quasi-periodic cocycles
$A\colon \T^d\to \GL_m(\C)$ with an invariant 
measurable decomposition $\C^m=E^+\oplus E^-$
such that
\begin{enumerate}
\item $\dim E^+=k$,
\item all Lyapunov exponents of $A\vert_{E^+}$ are positive,
\item all Lyapunov exponents of $A\vert_{E^-}$ are negative,
\item no continuous cocycle $B\colon \T^d\to \GL_m(\C)$ in the homotopy class of $A$ is $k$-dominated.
\end{enumerate}   
\end{theorem}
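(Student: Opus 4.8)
The plan is to pit a homological obstruction to dominated splitting against an explicit construction of a cocycle realizing the prescribed measurable hyperbolic structure. First the obstruction. If some continuous $B\colon\T^d\to\GL_m(\C)$ homotopic to $A$ were $k$-dominated, with continuous invariant splitting $\C^m=E^+\oplus E^-$, $\dim E^+=k$, then $x\mapsto E^+(x)$ would be a continuous section of the skew-product $\hat B(x,W)=(Tx,B(x)W)$ on the Grassmannian bundle $\T^d\times\Gr_k(\C^m)$. Since an ergodic translation $T$ is isotopic to the identity, its mapping torus is diffeomorphic to $\T^{d+1}$, and the suspension of $\hat B$ is the $\Gr_k(\C^m)$-bundle associated to the rank-$m$ vector bundle $E_B$ over $\T^{d+1}$ having $B$ as clutching function; the invariant section becomes a global section, i.e. a splitting of $E_B$ into continuous sub-bundles of ranks $k$ and $m-k$. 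The isomorphism type of $E_B$ is a homotopy invariant of $[B]$, so by obstruction theory this yields conditions on the Chern classes (equivalently the classes $B^{*}x_{2j-1}\in H^{2j-1}(\T^d;\Z)$ pulled back from the standard odd generators of $H^{*}(\GL_m(\C);\Z)\cong\Lambda(x_1,x_3,\dots,x_{2m-1})$) and, where these are too coarse, on the homotopy class of the section itself. The concrete step is to isolate a characteristic number $\alpha(A)$ that is forced to vanish, or to be divisible by a fixed integer, whenever a $k$-dominated representative exists; then (4) follows once $\alpha(A)$ is a forbidden value. The simplest instance, $d=3$, $m=2$, $k=1$, already exhibits the mechanism: there $E_B$ is a rank-$2$ bundle over $\T^4$, a $1$-dominated $B$ forces $E_B=L^{+}\oplus L^{-}$ with $c_1(L^{+})+c_1(L^{-})=c_1(E_B)$ and $c_1(L^{+})c_1(L^{-})=c_2(E_B)$, so if $[B]$ has $c_1(E_B)=0$ and $c_2(E_B)$ odd then $c_1(L^{+})^{2}=-c_2(E_B)$ is odd, impossible since every square in $H^2(\T^4;\Z)$ is even.

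For the construction I would take $A(x)=A_0(x)\,\diag(\mu I_k,\mu^{-1}I_{m-k})$ with $\mu>1$ large and $A_0\colon\T^d\to\SU(m)$ analytic: the diagonal factor produces the hyperbolicity while $A_0$ carries the topology, and since a constant matrix is homotopically trivial one has $[A]=[A_0]$, so $\alpha(A)=\alpha(A_0)$ does not depend on $\mu$. I would choose $A_0$ to realize a forbidden value of $\alpha$ — for instance, in the case above, $A_0$ homotopic to a degree-one collapse map $\T^3\to\SU(2)\cong\Su^3$, which makes $c_2(E_{A_0})=\pm1$ odd while $\det A_0\equiv1$ forces $c_1(E_{A_0})=0$ — and then replace it by an analytic map in the same homotopy class by truncating its Fourier expansion (the homotopy class of a map between closed manifolds of equal dimension is $C^0$-stable).

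To verify (1)--(3), note that $A_0$ being unitary, each $A(x)$ has the singular values of $\diag(\mu I_k,\mu^{-1}I_{m-k})$ and $\det A\equiv\mu^{2k-m}$. A Herman-type estimate — extend $\tfrac1n\log\norm{\wedge^{j}A^{(n)}(z)}$ to a complex strip, use plurisubharmonicity together with Jensen's formula applied to the holomorphic matrix entries $\langle\wedge^{j}A^{(n)}(z)\,w,w'\rangle$ for suitable $w=e_1\wedge\cdots\wedge e_j$ — gives that the top exponent of $\wedge^{j}A$ is within $O(1)$ of the corresponding quantity for $\diag(\mu I_k,\mu^{-1}I_{m-k})$, namely $j\log\mu$ for $j\le k$ and $(2k-j)\log\mu$ for $j>k$; subtracting consecutive such bounds shows that for $\mu$ large the first $k$ Lyapunov exponents of $A$ are positive and the remaining $m-k$ are negative. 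The Oseledets splitting at the gap $k$ is then a measurable invariant decomposition $\C^m=E^{+}\oplus E^{-}$ with $\dim E^{+}=k$ satisfying (1)--(3), while (4) holds by the obstruction, uniformly in $\mu$.

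The hard part will be the homotopy theory underpinning the obstruction and its match to the construction: deciding precisely which characteristic datum of $E_A$ blocks a rank-$(k,m-k)$ splitting, producing an analytic $A_0$ that makes this datum forbidden, and carrying this out uniformly over all $d\ge2$ and $1\le k<m$. The low-dimensional cases are the most delicate — for $d=2$ the Chern classes of $E_A$ over $\T^3$ are too sparse to obstruct anything by themselves, so one presumably must retain the finer datum of the homotopy class of the invariant section (a secondary obstruction) and check that the measurable Oseledets bundle of the constructed $A$ carries the same forbidden invariant. A subsidiary point, needed because large $\mu$ is used in the exponent estimates, is to confirm that this does not inadvertently create a continuous invariant splitting; this is exactly ruled out by the $\mu$-independence of $\alpha(A)=\alpha(A_0)$, but it must be verified.
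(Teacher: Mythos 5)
Your proposal shares the broad strategy of the paper --- use an algebraic-topological obstruction to the existence of a continuous invariant section in the Grassmannian bundle, and separately construct a cocycle with the prescribed measurable Oseledets decomposition --- but the implementation differs, and it contains gaps that are not easily repaired.

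The most serious issue is the one you flag yourself: your obstruction is a Chern-class obstruction on the rank-$m$ bundle $E_B$ over the mapping torus $\T^{d+1}$, and you note that for $d=2$ the Chern data of $E_A$ on $\T^3$ are ``too sparse to obstruct anything by themselves''. But $d=2$ is included in the theorem; it is in fact the base case from which the paper builds all the others (the paper first produces an obstruction for $\T^2\to\GL_2(\C)$ and then embeds it block-diagonally into $\T^d\to\GL_m(\C)$). You propose to fall back on ``a secondary obstruction'' involving ``the homotopy class of the invariant section'', but this is not worked out, and it is not clear how you would control that datum for the measurable Oseledets section of your constructed cocycle. The paper avoids this difficulty entirely by working directly with the product $\T^d\times\Gr_k(\C^m)$ rather than with the suspension: since the translation $T$ is homotopic to the identity and $\GL_m(\C)$ is connected, the induced map $F_\ast$ on $H_2(\T^d\times\Gr_k(\C^m))$ is forced to be the identity whenever a splitting exists, which contradicts $(A_p)_\ast\neq0$ on $H_2$. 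This uses only $H_2$ and K\"unneth, and already bites at $d=2$ because $H_2(\T^2)\neq0$ and $H_2(\Gr_k(\C^m))\neq0$. The product-versus-suspension distinction is not cosmetic: the Künneth-based obstruction genuinely sees more in low dimensions than Chern classes of $E_B$ do.

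A secondary gap is the verification of items (1)--(3). You propose $A(x)=A_0(x)\,\diag(\mu I_k,\mu^{-1}I_{m-k})$ with $A_0$ $\SU(m)$-valued and assert, via ``a Herman-type estimate'' on $\wedge^j A$, that for $\mu$ large the first $k$ exponents are positive and the last $m-k$ negative. This is plausible but far from a proof: Herman's subharmonicity trick gives lower bounds on sums of exponents, not the gap at position $k$, and the unitarity of $A_0$ only controls singular values of $A(x)$, not of the iterates $A^{(n)}(x)$. Making this rigorous is a nontrivial Sorets--Spencer-type argument. The paper sidesteps it: it invokes \'Avila's density of nonuniform hyperbolicity for $\SL_2(\C)$-cocycles~\cite{A11} to obtain a $2\times2$ block with nonzero exponents, and then pads with explicit constant scalar blocks $\lambda I_{k-1}$ and $\mu I_{m-k-1}$ whose exponents are computed by inspection. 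Finally, even if your $d=3$, $m=2$, $k=1$ example were completed, you would still need the uniform extension to all $d\geq2$, $1\leq k<m$; the paper's embedding argument handles this by projecting $\T^d\to\T^2$ and using Schubert-cell structure to show that the inclusion $\Sigma\hookrightarrow\Gr_k(\C^m)$ of the relevant projective line is injective on $H_2$, preserving the obstruction --- a step your proposal does not address.
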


\begin{remark}
This theorem shows that the dichotomy in~\cite[Theorem 1.1]{AJS} does not hold for analytic quasi-periodic cocycles over a torus $\T^d$ of dimension $d\geq 2$. In fact any sufficiently  small neighborhood $\mathscr{V}$ of $A$ is contained in the homotopy class of $A$. In this neighborhood $\mathscr{V}$, by our continuity result~\cite[Theorem 6.1]{DK-book}, assuming that the translation vector satisfies a generic Diophantine condition,
the Oseledets decomposition $\C^m=E^+\oplus E^-$ persists with $\dim E^+ = k$. However, in view of Theorem~\ref{teor main}, this decomposition is never $k$-dominated.
\end{remark}

Consider now the projective space  $\Pp(\K^{m})$
where the group $\GL_m(\K)$ acts transitively.
More generally let $\Gr_k(\K^m)$ be the Grassmannian 
space of all $k$-dimensional $\K$-linear subspaces of $\K^m$,
which reduces to the projective space when $k=1$.

The cocycle $F$ determines the skew-product map 
$\hat F\colon \T^d\times \Gr_k(\K^m)\to \T^d\times \Gr_k(\K^m)$ defined by
 $\hat F(x, V):=(T x, A(x) V)$.
%where $\hat v$ stands for the projective class of a non-zero vector $v\in\K^m$.
Clearly the $k$-domination property implies the existence
of a continuous invariant section $E^+\colon \T^d\to \Gr_k(\K^m)$  for the bundle map $\hat F$. 
%In fact, each sub-bundle  $E_i$ determines  an $\hat F$-invariant section.
The strategy to prove Theorem~\ref{teor main} is   to 
derive topological obstructions to the existence of
 continuous invariant sections $\sigma\colon \T^d\to\Gr_k(\C^m)$ of the cocycle $A$.

\begin{remark}
The statement of Theorem~\ref{teor main} hods also for $\GL_m(\R)$-valued cocycles over a torus $\T^d$ with dimension $d\geq 1$.
This can be proven   analogously or more simply using  M. Herman's  method described in~\cite{Viana-book}.
The topological obstructions there use first homotopy groups and are applicable because the real  Grassmannians 
$\Gr_k(\R^m)$ are not simply connected, something which is not true  about the complex Grassmannians $\Gr_k(\C^m)$.
\end{remark}

The paper is organized as follows. In Section~\ref{invariant sections} we provide a necessary condition for the existence of a continuous invariant section of a skew product map. In Section~\ref{consequences qp} we use the previous abstract result to provide topological obstructions to the existence of continuous invariant sections for quasi-periodic cocycles on the higher dimensional torus. This in particular implies our main theorem.

\medskip

We are grateful to Christian Sadel for posing the question regarding dominated splitting for quasi-periodic cocycles on the torus of several variables, to Gustavo Granja for a valuable suggestion on using the nonexistence of homological splitting as a topological obstruction to dominated splitting and to
Marcelo  Viana for providing us with several references on this subject.

%%%%%%%   Invariant sections %%%%%%%%%%%%%%

\section{Existence of invariant sections}\label{invariant sections}

We  call {\em factor}  of linear maps   any
commutative diagram 
\begin{equation}\label{factor CD}
\begin{CD}
E    @>f >> E  \\
@V \pi  VV        @VV\pi V\\
F   @>>h > F
\end{CD} 
\end{equation}
where $E$, $F$ are vector spaces, $f\colon E\to E$, $h\colon F\to F$  are linear endomorphisms
and $\pi\colon E\to F$ is a linear epimorphism.
We call {\em splitting} of a factor~\eqref{factor CD} any linear map
$\sigma\colon F\to E$ such that $\pi\circ \sigma=\id_F$ and
$f\circ\sigma = \sigma\circ h$. In other words $\sigma$ is an $f$-invariant  section of the vector bundle $\pi\colon E\to F$.

Letting $K=\ker(\pi)$, by the fundamental theorem on homomorphisms, the linear epimorphism $\pi\colon E\to F$ induces an isomorphism $\bar \pi \colon E/K \simeq F$ through which the factor~\eqref{factor CD} can be expressed as
\begin{equation}\label{standard factor CD}
\begin{CD}
E    @>f >> E  \\
@V \pi  VV        @VV\pi V\\
E/K   @>>\bar f > E/K
\end{CD} 
\end{equation}
where $K$ stands for an  $f$-invariant vector subspace of $E$.
From these considerations it follows easily that

\begin{proposition}
\label{general splitting char}
The factor~\eqref{factor CD} has a splitting \, if and only if\,
the vector space $E$ admits an $f$-invariant decomposition
$E=G\oplus \ker(\pi)$.
\end{proposition}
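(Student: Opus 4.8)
The plan is to prove the equivalence directly from the normal form~\eqref{standard factor CD}, treating the two implications separately.

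\medskip

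First I would establish the easy direction: suppose $E = G \oplus \ker(\pi)$ with $G$ an $f$-invariant subspace. Since $\pi$ restricted to $G$ is injective (its kernel is $G \cap \ker(\pi) = \{0\}$) and surjective (because $G + \ker(\pi) = E$ and $\pi(E) = F$), the map $\pi|_G \colon G \to F$ is a linear isomorphism. Define $\sigma := (\pi|_G)^{-1} \colon F \to E$. Then $\pi \circ \sigma = \id_F$ by construction, so it remains only to verify $f \circ \sigma = \sigma \circ h$. For this I would take $w \in F$, write $v := \sigma(w) \in G$, and use $f$-invariance of $G$ together with commutativity of~\eqref{factor CD}: $f(v) \in G$ and $\pi(f(v)) = h(\pi(v)) = h(w)$, so by injectivity of $\pi|_G$ we get $f(v) = \sigma(h(w))$, i.e. $f(\sigma(w)) = \sigma(h(w))$, as desired.

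\medskip

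For the converse, suppose a splitting $\sigma \colon F \to E$ exists. I would set $G := \sigma(F)$, the image of $\sigma$, and claim $E = G \oplus \ker(\pi)$ with $G$ being $f$-invariant. The $f$-invariance is immediate from $f \circ \sigma = \sigma \circ h$: any element of $G$ has the form $\sigma(w)$, and $f(\sigma(w)) = \sigma(h(w)) \in G$. For the direct sum decomposition, $\pi \circ \sigma = \id_F$ forces $\sigma$ to be injective and shows $G \cap \ker(\pi) = \{0\}$ (if $\sigma(w) \in \ker \pi$ then $w = \pi(\sigma(w)) = 0$); and for any $v \in E$, the element $v - \sigma(\pi(v))$ lies in $\ker(\pi)$ since $\pi(v - \sigma(\pi(v))) = \pi(v) - \pi(v) = 0$, giving $v = \sigma(\pi(v)) + (v - \sigma(\pi(v))) \in G + \ker(\pi)$.

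\medskip

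I do not anticipate any serious obstacle here — the statement is essentially a linear-algebra bookkeeping exercise, and the reduction to the form~\eqref{standard factor CD} already recorded before the proposition does most of the conceptual work. The only point requiring a little care is keeping straight which maps are invertible on which subspaces (namely that $\pi|_G$ is an isomorphism in both directions of the argument), and making sure the $f$-invariance of $G$ is used exactly where needed. One could alternatively phrase the whole proof in terms of the quotient $E/K$ and the induced $\bar f$, but working with $\ker(\pi)$ and the section $\sigma$ directly seems cleanest.
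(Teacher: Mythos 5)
Your proof is correct and is exactly the direct verification that the paper leaves implicit when it says the proposition ``follows easily'' from the reduction to the standard factor~\eqref{standard factor CD}; both directions (taking $\sigma=(\pi|_G)^{-1}$ one way, and $G=\sigma(F)$ the other) are the natural argument and are carried out without gaps.
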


Let $M$ be a compact connected manifold. Consider a continuous map $T:M\to M$ and a transitive action $G\times X\to X$ of a connected Lie group $G$ on some compact connected homogeneous space $X$.
A continuous function  $A:M\to G$ determines 
the skew-product map 
\begin{equation}
\label{F map}
 F\colon M\times X\to M\times X,\quad   F(x,p):= (T x, A(x)\,p).
\end{equation}
By definition, letting $\pi\colon M\times X\to M$ stand  for the canonical projection $\pi(x,p)=x$, the following diagram commutes

\begin{equation}\label{skew product CD}
\begin{CD}
M \times X     @>F  >> M\times X  \\
@V \pi  VV        @VV\pi  V\\
M   @>T  >> M
\end{CD} 
\end{equation}

\medskip

We call  {\em $F$-invariant section}   any  continuous
map $\sigma\colon M\to X$ such that  
$F(x,\sigma(x))=(T x,\sigma(T x))$ for all $x\in M$.

An  obvious necessary condition to the existence of an $F$-invariant section is the splitting property of the factor~\eqref{skew product CD} at the level of homology (the reader may consult~\cite{Hatcher} for a general reference on singular homology).

\begin{proposition}
\label{prop splitting}
Given a number field $\F$, if the map~\eqref{F map} admits an invariant section then for
each   $0\leq i\leq \dim M$  the homological factor
\begin{equation}\label{homology CD}
\begin{CD}
H_i(M \times X,\F)      @>F_\ast  >> H_i(M\times X,\F)   \\
@V \pi_\ast  VV        @VV\pi_\ast  V\\
H_i(M,\F)   @>T_\ast  >> H_i(M,\F) 
\end{CD} 
\end{equation}
admits a splitting.
\end{proposition}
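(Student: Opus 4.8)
The plan is to recognize that an $F$-invariant section is nothing but a continuous splitting of the diagram~\eqref{skew product CD} at the level of spaces, and then to push this splitting down to homology by functoriality.

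First I would repackage the invariant section $\sigma\colon M\to X$ as the map $s\colon M\to M\times X$ given by $s(x):=(x,\sigma(x))$, which is continuous because $\sigma$ is. The definition $\pi(x,p)=x$ gives $\pi\circ s=\id_M$, while the invariance relation $F(x,\sigma(x))=(Tx,\sigma(Tx))$ is exactly the statement that $F\circ s=s\circ T$. Thus $s$ is a continuous section of $\pi\colon M\times X\to M$ that intertwines $T$ and $F$, i.e.\ a topological splitting of~\eqref{skew product CD}.

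Next I would apply the singular homology functor $H_i(-,\F)$. Since $\F$ is a field this functor takes values in $\F$-vector spaces, and it is covariant, so that $(\phi\circ\psi)_\ast=\phi_\ast\circ\psi_\ast$ and $(\id_M)_\ast=\id_{H_i(M,\F)}$. Applying these rules to the two identities above yields
$$\pi_\ast\circ s_\ast=(\pi\circ s)_\ast=\id_{H_i(M,\F)},\qquad F_\ast\circ s_\ast=(F\circ s)_\ast=(s\circ T)_\ast=s_\ast\circ T_\ast,$$
so the linear map $s_\ast\colon H_i(M,\F)\to H_i(M\times X,\F)$ is precisely a splitting of the homological factor~\eqref{homology CD} in the sense defined before Proposition~\ref{general splitting char}. (That~\eqref{homology CD} is genuinely a factor, i.e.\ that $\pi_\ast$ is onto, also follows easily --- for instance from the fact that $\pi\colon M\times X\to M$ already admits the continuous section $x\mapsto(x,p_0)$ for any fixed $p_0\in X$ --- but this is not needed to produce the splitting.)

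I do not expect a genuine obstacle here: the entire content is the formal identification of an $F$-invariant continuous section with a continuous splitting of~\eqref{skew product CD}, after which the covariant functoriality of homology with coefficients in $\F$ does the rest. The only points that require checking are the continuity of $s$ (inherited from $\sigma$) and the strict commutativity $F\circ s=s\circ T$ (which is the very definition of invariance); one could equally run the argument with cohomology or any other functor, but field-coefficient homology is the natural choice for later converting the splitting into the linear-algebraic condition of Proposition~\ref{general splitting char}.
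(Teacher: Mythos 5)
Your proposal is correct and follows essentially the same route as the paper: the paper also pushes the invariant section forward by functoriality to obtain the homological splitting (it writes the graph map $x\mapsto(x,\sigma(x))$ simply as ``$\sigma$'', which you more carefully denote by $s$). The only minor difference is that the paper invokes the K\"unneth theorem to justify surjectivity of $\pi_\ast$, whereas you observe it already follows from the constant section $x\mapsto(x,p_0)$ — a slightly more elementary point, but not a different strategy.
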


\begin{proof}
By the K\"unneth theorem the map
$\pi_\ast\colon H_i(M\times X,\F)\to H_i(M,\F)$
is surjective.
If $\sigma\colon M\to X$ is an $F$-invariant section
then its ho\-mo\-lo\-gy $\sigma_\ast\colon H_i(M,\F)\to H_i(M\times X,\F)$
is a splitting of the homological factor~\eqref{homology CD}.
\end{proof}

The next proposition specializes the previous criterion to the case where the map $T\colon M\to M$ is homotopic to the identity.

\begin{proposition}
\label{prop no splitting}
Let $T\colon M\to M$ be a continuous map homotopic to the identity.
Let $\F$ be a number field, $A\colon M\to G$ a continuous function 
and $1\leq k \leq \dim M$ a dimension  such that:
\begin{enumerate}
\item $H_k(M,\F)\neq \{0\}$,
\item $H_i(M,\F) = \{0\}$ \, or \, $H_{k-i}(X,\F) = \{0\}$, 
for all $0<i<k$,
\item For some $p\in X$ the map
$A_p\colon M\to X$, $A_p(x):= A(x) p$, induces a non-zero homology map in dimension $k$, i.e., the linear map
$(A_p)_\ast \colon H_k(M,\F)\to H_k(X,\F)$ is non zero.
\end{enumerate}
Then $F$ admits no $F$-invariant section.
\end{proposition}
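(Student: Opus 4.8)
The plan is to argue by contradiction. If $F$ admitted an invariant section, then by Proposition~\ref{prop splitting} the homological factor~\eqref{homology CD} would admit a splitting in every dimension $i$; I will show this is impossible for $i=k$, which is the desired contradiction.

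The first step is to describe the three homology groups appearing in~\eqref{homology CD} for $i=k$. By the K\"unneth formula over the field $\F$, $H_k(M\times X,\F)=\bigoplus_{i+j=k}H_i(M,\F)\otimes_\F H_j(X,\F)$, and hypothesis (2) annihilates every summand with $0<i<k$. Using that $M$ and $X$ are connected, only two summands survive, canonically identified with $H_k(M,\F)$ (the term $H_k(M,\F)\otimes H_0(X,\F)$) and $H_k(X,\F)$ (the term $H_0(M,\F)\otimes H_k(X,\F)$). A bookkeeping check with this identification shows that $\pi_\ast$ is the projection onto the first summand, so that $\ker\pi_\ast\cong H_k(X,\F)$, and --- comparing~\eqref{homology CD} with~\eqref{standard factor CD} --- that the endomorphism of $H_k(M\times X,\F)/\ker\pi_\ast\cong H_k(M,\F)$ induced by $F_\ast$ is $T_\ast$.

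The second step is to compute $F_\ast$ on the two surviving summands. An element of $\ker\pi_\ast=H_0(M,\F)\otimes H_k(X,\F)$ has the form $[m_0]\times\beta=(j_{m_0})_\ast\beta$ for the slice inclusion $j_{m_0}(p)=(m_0,p)$; since $F$ maps the slice $\{m_0\}\times X$ onto $\{Tm_0\}\times X$ via the action of the group element $A(m_0)\in G$, since the connectedness of $G$ forces $A(m_0)$ to act as the identity on $H_\ast(X,\F)$, and since $[Tm_0]=[m_0]$ in $H_0(M,\F)$, one gets that $F_\ast$ is the identity on $\ker\pi_\ast$. An element of $H_k(M,\F)\otimes H_0(X,\F)$ has the form $\alpha\times[p]=(i_p)_\ast\alpha$ for the slice inclusion $i_p(x)=(x,p)$, and $F\circ i_p=(T,A_p)\colon M\to M\times X$. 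Composing with $\pi$ and with the projection $q\colon M\times X\to X$, and noting that $q_\ast$ vanishes on $H_k(M,\F)\otimes H_0(X,\F)$ while restricting to the canonical isomorphism on $\ker\pi_\ast$, one finds $\pi_\ast F_\ast(\alpha\times[p])=T_\ast\alpha$ and $q_\ast F_\ast(\alpha\times[p])=(A_p)_\ast\alpha$. Hence, taking $p$ as in hypothesis (3), there is an $\alpha$ for which $F_\ast(\alpha\times[p])$ has a nonzero $\ker\pi_\ast$-component, whereas $\alpha\times[p]$ has none; in particular $F_\ast(\alpha\times[p])\neq\alpha\times[p]$.

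It remains to bring in the hypothesis that $T$ is homotopic to the identity, so that $T_\ast=\id$ on $H_k(M,\F)$ and therefore the endomorphism induced by $F_\ast$ on $H_k(M\times X,\F)/\ker\pi_\ast$ is the identity. If the factor~\eqref{homology CD} admitted a splitting, Proposition~\ref{general splitting char} would provide an $F_\ast$-invariant complement $W$ of $\ker\pi_\ast$; then $F_\ast$ would be the identity both on $\ker\pi_\ast$ (second step) and on $W$ (since the equivariant isomorphism $W\cong H_k(M\times X,\F)/\ker\pi_\ast$ carries $F_\ast|_W$ to the identity), hence $F_\ast=\id$ on all of $H_k(M\times X,\F)$ --- contradicting $F_\ast(\alpha\times[p])\neq\alpha\times[p]$. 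The two points requiring care are the K\"unneth bookkeeping (matching $\pi_\ast$, $q_\ast$ and the slice inclusions with the correct K\"unneth summands) and the use of the connectedness of $G$ to trivialize the induced $G$-action on $H_\ast(X,\F)$; the homotopy hypothesis on $T$ is precisely what collapses $T_\ast$ to the identity, and is the reason the conclusion can be drawn here rather than merely from Proposition~\ref{prop splitting}.
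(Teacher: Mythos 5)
Your proof is correct and takes essentially the same route as the paper's: the Künneth reduction of $H_k(M\times X,\F)$ to two summands under hypotheses (1)--(2), the observation that connectedness of $G$ forces $F_\ast$ to act as the identity on $\ker\pi_\ast$, the use of $T\simeq\mathrm{id}$ to propagate the identity to an $F_\ast$-invariant complement and hence to all of $H_k(M\times X,\F)$, and the factorization $A_p=\pi'\circ F\circ i_p$ to bring in hypothesis (3). The only difference is organizational: the paper assumes the splitting first and derives $(A_p)_\ast=\pi'_\ast\circ(i_p)_\ast=0$ as the contradiction, whereas you first show directly that $F_\ast$ is not the identity on $(i_p)_\ast H_k(M,\F)$ and then derive the contradiction from the splitting hypothesis.
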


\begin{proof}
By the K\"unneth theorem and assumptions (1)-(2),
\begin{align*}
H_k(M\times X,\F) &\simeq H_k(M,\F)\otimes H_0(X,\F) \, \oplus \,
H_0(M,\F)\otimes H_k(X,\F) \\
&\simeq H_k(M,\F) \, \oplus \, H_k(X,\F) .
\end{align*}
We are also using here that $M$ and $X$ are connected so that
$H_0(M,\F)\simeq H_0(X,\F)\simeq \F$.
Hence the epimorphism  $\pi_\ast \colon H_k(M\times X,\F)\to H_k(M,\F)$ has kernel
 $$ \ker(\pi_\ast)\simeq H_0(M,\F)\otimes H_k(X,\F)
 \simeq   H_k(X,\F)  . $$
 Similarly, the projection $\pi'\colon M\times X\to X$,
 $\pi'(x,p)=p$,  induces a homology map
 $\pi'_\ast\colon H_k(M\times X,\F)\to H_k(X,\F)$ with kernel 
  $$ \ker(\pi'_\ast)\simeq H_k(M,\F)\otimes H_0(X,\F)
 \simeq   H_k(M,\F)  . $$

Because $G$ is connected, each element $A(x)\in G$ induces an action
$A(x)\colon X\to X$ which is isotopic to the identity.
Therefore the homology map $F_\ast\colon H_k(M\times X,\F) \to H_k(M\times X,\F)$
acts as the identity on $ \ker(\pi_\ast)$.

 Assume now, by contradiction, that $F$ admits an invariant section.
 By Proposition~\ref{prop splitting} there exists an $F_\ast$-invariant subspace $G$   such that 
\begin{equation}
\label{Hd splitting}
 H_k(M\times X,\F)=\ker(\pi_\ast)\oplus G.
\end{equation}
Since $T\colon M\to M$ is homotopic to $\id_M$ we have
$T_\ast=\id$ on $H_k(M,\F)$.  This implies that $F_\ast$ is the identity map on $G$. Hence, because~\eqref{Hd splitting} is $F_\ast$-invariant,  it follows that $F_\ast$ is the identity on $H_k(M\times X,\F)$.

Finally, defining the inclusion map
$i_p\colon M\to M\times X$, $i_p(x):=(x,p)$, 
since $A_p=\pi'\circ F\circ i_p$ we have at the homology level 
\begin{align*}
0\neq (A_p)_\ast &= (\pi'\circ F\circ i_p)_\ast =
 \pi'_\ast \circ F_\ast \circ (i_p)_\ast\\
 &= \pi'_\ast   \circ (i_p)_\ast = (\pi' \circ i_p)_\ast = 0 .
\end{align*} 
We have used assumption (3)  and the fact that the composition $\pi' \circ i_p$ is a constant map. This contradiction proves that $F$ admits no invariant section.
\end{proof}

%%%%%%%   Dominated splitting %%%%%%%%%%%%%%%%%%%%%   

\section{Consequences for quasi-periodic cocycles}\label{consequences qp}

Finally we show that for certain homotopy types
a continuous quasi-periodic cocycle $A\colon\T^d\to \GL_m(\C)$ cannot have
dominated splitting. The base dynamics is a\-ssumed to be an ergodic  translation of a torus $\T^d$ of dimension $d\geq 2$.

Let $\Gr_k(\C^m) $ denote the  {\em complex Grassmannian} of
$k$-dimensional complex subspaces of $\C^m$.

\begin{proposition}
\label{coro Td GLm}
Let  $A\colon\T^d\to \GL_m(\C)$  be a continuous function with $d\geq 2$.
If  the map
$A_{V}:\T^d \to \Gr_k(\C^m)$,
$A_{V}(x)=A(x) V$,  for some $V\in \Gr_k(\C^m)$, induces a non-zero homology map in dimension two, i.e., the linear map
$(A_{V})_\ast \colon H_2(\T^d,\F)\to H_2(\Gr_k(\C^m),\F)$ is non zero for some field $\F$ and some $1\leq k <m$,
then the quasi-periodic cocycle $A$ has no continuous invariant section $\sigma\colon \T^d\to \Gr_k(\C^m)$. In particular  $A$  is not $k$-dominated.
\end{proposition}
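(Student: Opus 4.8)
The plan is to deduce this directly from Proposition~\ref{prop no splitting}, applied with $M = \T^d$, $X = \Gr_k(\C^m)$, $G = \GL_m(\C)$, and with the distinguished homological dimension equal to $2$. First I would observe that an ergodic translation $T\colon \T^d \to \T^d$ is homotopic to the identity (a translation by $\alpha$ is connected to $\id$ through the translations by $t\alpha$, $t \in [0,1]$), so the standing hypothesis of Proposition~\ref{prop no splitting} is met, and that $G = \GL_m(\C)$ is connected. It then remains to check the three numbered conditions of that proposition with $k$ there set to $2$.

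For condition (1), $H_2(\T^d, \F) \neq \{0\}$ whenever $d \geq 2$, since $H_2(\T^d,\F) \simeq \F^{\binom{d}{2}}$. For condition (2), the only relevant value is $i = 1$, and although $H_1(\T^d,\F) \neq \{0\}$, the complementary factor vanishes: the complex Grassmannian $\Gr_k(\C^m)$ is simply connected (it admits a CW structure with cells only in even dimensions — the Schubert cells — so in particular $\pi_1 = 0$ and $H_1(\Gr_k(\C^m),\F) = \{0\}$). Hence condition (2) holds. Condition (3) is exactly the hypothesis that $(A_V)_\ast \colon H_2(\T^d,\F) \to H_2(\Gr_k(\C^m),\F)$ is non-zero for some $V$ and some $\F$. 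Proposition~\ref{prop no splitting} therefore applies and yields that the skew-product $\hat F(x,W) = (Tx, A(x)W)$ on $\T^d \times \Gr_k(\C^m)$ admits no continuous invariant section.

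To conclude, I would recall the implication noted just before the proposition: if $A$ were $k$-dominated, its dominated decomposition $\C^m = E^+(x) \oplus E^-(x)$ with $\dim E^+ = k$ would give a continuous $\hat F$-invariant section $x \mapsto E^+(x)$ of $\T^d \times \Gr_k(\C^m) \to \T^d$, contradicting the previous paragraph. Hence $A$ is not $k$-dominated.

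I do not expect a genuine obstacle here: the argument is a bookkeeping application of Proposition~\ref{prop no splitting}. The only point requiring a little care — and worth stating explicitly — is the vanishing $H_1(\Gr_k(\C^m),\F) = \{0\}$, i.e.\ the even-cell (Schubert) cell structure of the complex Grassmannian, which is precisely the feature that fails for the real Grassmannians and is what makes the complex case amenable to a purely homological (rather than homotopical) obstruction.
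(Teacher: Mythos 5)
Your proof is correct and follows essentially the same route as the paper: apply Proposition~\ref{prop no splitting} with $M=\T^d$, $X=\Gr_k(\C^m)$ in homological dimension $2$, using $H_1(\Gr_k(\C^m),\F)=\{0\}$ and $H_2(\T^d,\F)\neq\{0\}$ to verify hypotheses (1)--(3), and then observe that $k$-domination would yield a continuous $\hat F$-invariant section. The only detail you add beyond the paper's write-up --- the Schubert even-cell argument for $H_1(\Gr_k(\C^m),\F)=\{0\}$ and the explicit remarks that $T$ is homotopic to the identity and $\GL_m(\C)$ is connected --- is material the paper delegates to its cited references and to the standing hypotheses of Proposition~\ref{prop no splitting}.
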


\begin{proof}
Let us apply Proposition~\ref{prop no splitting} with
$M=\T^d$, $X=\Gr_k(\C^m)$ and dimension $k=2$. 
For any field $\F$  we have
$\dim H_0(\Gr_k(\C^m),\F)=1$ because $\Gr_k(\C^m)$ is a connected manifold. We have  $\dim H_1(\Gr_k(\C^m),\F)=0$ and
$ \dim H_2(\Gr_k(\C^m),\F) \geq 1$  
(see~\cite[Section 3.2]{Nicolaescu} or~\cite[Section 5 of Chapter 1]{Harris-Griffiths}).
We also have  $H_2(\T^d,\F)\simeq \F^{\binom{d}{2}}\neq \{0\}$
because $d\geq 2$. Therefore assumption  (1) and (2) of Proposition~\ref{prop no splitting} hold.
On the other hand, our hypothesis implies assumption (3)
of that proposition. Therefore, by Proposition~\ref{prop no splitting}, the map $\hat F\colon \T^d\times \Gr_k(\C^m)\to \T^d\times \Gr_k(\C^m)$
does not admit any $\hat F$-invariant section.

Finally, if the quasi-periodic cocycle $A$ is $k$-dominated 
then the $F$-invariant sub-bundle $E^+$   determines an $\hat F$-invariant section  $E^+:\T^d\to \Gr_k(\C^m)$. This contradiction proves that
$A$ is not $k$-dominated.
\end{proof}

\begin{corollary}
\label{coro T2 GL2}
Consider a quasi-periodic cocycle
$A\colon \T^2\to\GL_2(\C)$. 
If  the map
$A_{\hat v}:\T^2 \to \Pp(\C^2)$,
$A_{\hat v}(x)=A(x) \hat v$, for some $\hat v\in \Pp(\C^2)$, is not homotopic to a constant 
then   $A$ does not have dominated splitting.
\end{corollary}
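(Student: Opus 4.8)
The plan is to deduce Corollary~\ref{coro T2 GL2} from Proposition~\ref{coro Td GLm} by specializing to $d=2$, $m=2$, $k=1$, so that $\Gr_k(\C^m)=\Pp(\C^2)\simeq\Su^2$, and then translating the ``not homotopic to a constant'' hypothesis into the ``non-zero homology map in dimension two'' hypothesis required by that proposition. First I would fix the field $\F=\Q$ (or any field of characteristic zero) and recall the homology of the two spaces involved: $H_2(\T^2,\Q)\simeq\Q$, generated by the fundamental class of the torus, and $H_2(\Pp(\C^2),\Q)=H_2(\Su^2,\Q)\simeq\Q$, since $\Pp(\C^2)$ is homeomorphic to the $2$-sphere. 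So the induced map $(A_{\hat v})_\ast\colon H_2(\T^2,\Q)\to H_2(\Pp(\C^2),\Q)$ is multiplication by an integer, the degree of $A_{\hat v}\colon\T^2\to\Su^2$.

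Next I would argue that if $A_{\hat v}$ is not homotopic to a constant map, then this degree is non-zero, i.e. $(A_{\hat v})_\ast\neq 0$. This is where the main work lies. The cleanest route is to invoke the fact that for maps from a closed orientable surface $\Sigma$ into $\Su^2$, the homotopy class is completely determined by the degree: this is a classical consequence of obstruction theory / the Hopf classification theorem (maps $X\to\Su^n$ from an $n$-complex $X$ are classified up to homotopy by $H^n(X;\Z)$ via the induced map on top cohomology), applied with $n=2$ and $X=\T^2$. Concretely, $[\T^2,\Su^2]\cong H^2(\T^2;\Z)\cong\Z$, the bijection being induced by $f\mapsto f^\ast(\text{generator})$, equivalently $f\mapsto\deg f$. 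Hence $A_{\hat v}$ homotopic to a constant is equivalent to $\deg A_{\hat v}=0$, which is equivalent to $(A_{\hat v})_\ast=0$ on $H_2$. Taking the contrapositive: $A_{\hat v}$ not nullhomotopic implies $(A_{\hat v})_\ast\neq 0$.

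Having established that the hypothesis of Proposition~\ref{coro Td GLm} is satisfied with $d=2$, $k=1$, $m=2$ and $\F=\Q$, I would then simply apply that proposition to conclude that $A$ has no continuous invariant section $\sigma\colon\T^2\to\Pp(\C^2)$, hence is not $1$-dominated. Finally, for $\GL_2(\C)$-valued cocycles the only possible nontrivial dominated splitting $\C^2=E_1\oplus E_2$ has $\dim E_1=\dim E_2=1$, so dominated splitting of $A$ is the same as $1$-domination (up to ordering the two line bundles by the size of their Lyapunov exponents). Therefore $A$ does not have dominated splitting, which is the claim.

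The step I expect to be the main obstacle — or at least the one requiring the most care — is the homotopy-classification input, namely justifying rigorously that a map $\T^2\to\Su^2$ is nullhomotopic if and only if it has degree zero. One must make sure to cite the correct form of the Hopf theorem (classification of maps into $\Su^n$ by top cohomology holds for CW-complexes of dimension $\le n$, and $\T^2$ is a $2$-complex) rather than the version for maps out of spheres. An alternative that sidesteps obstruction theory is to note directly that the induced map on $\pi_2$ or on $H_2$ being zero forces the map to lift through a cell structure / that $\Su^2$ has no homotopy below dimension $2$, so degree is the only obstruction; but invoking Hopf's classification theorem is the shortest honest argument and is what I would write.
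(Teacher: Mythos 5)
Your proof is correct and follows essentially the same route as the paper: identify $\Pp(\C^2)$ with $\Su^2$, invoke the Hopf classification theorem to convert ``not nullhomotopic'' into ``nonzero degree,'' observe that degree is the induced map on $H_2$, and feed this into Proposition~\ref{coro Td GLm} with $d=2$, $m=2$, $k=1$. One small improvement on your side: you fix $\F=\Q$, which is the right choice, since over a field whose characteristic divides $\deg(A_{\hat v})$ the induced map on $H_2(\cdot,\F)$ would vanish even when the degree is nonzero; the paper leaves $\F$ unspecified at this point and is silently assuming characteristic zero.
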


\begin{proof}
The projective space $\Pp(\C^2)$ can be identified with the Riemann sphere $\Su^2\equiv \C\cup \{\infty\}$.
Since $A_{\hat v}$ is not homotopic to a constant, by Hopf theorem $\deg( A_{\hat v} )\neq 0$. Then, making the canonical identifications $H_2(\T^2,\F)\simeq \F$
and $H_2(\Pp(\C^2),\F)\simeq \F$, the homology map
$(A_{\hat v})_\ast\colon H_2(\T^2,\F)\to H_2(\Pp(\C^2),\F)$ 
is the multiplication by $\deg( A_{\hat v} )$,
and hence it is non zero.
\end{proof}

\medskip

\begin{corollary}
\label{coro T2 GL2 no DS}
There are  continuous functions 
$A\colon \T^2\to\GL_2(\C)$ whose homotopy classes contain no quasi-periodic cocycle with do\-mi\-na\-ted splitting.
\end{corollary}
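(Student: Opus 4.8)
The plan is to produce an explicit continuous function $A\colon\T^2\to\GL_2(\C)$ for which Corollary~\ref{coro T2 GL2} applies, i.e.\ such that for some $\hat v\in\Pp(\C^2)$ the induced map $A_{\hat v}\colon\T^2\to\Pp(\C^2)\equiv\Su^2$ has nonzero degree. The simplest approach is to take a map that is already valued in $\Pp(\C^2)$ of nonzero degree and realize it as $A_{\hat v}$ for a matrix cocycle. Concretely, identify $\Pp(\C^2)$ with $\C\cup\{\infty\}$ via $[z_1:z_2]\mapsto z_1/z_2$, fix $\hat v = [1:0]$, and look for $A(x_1,x_2)=\begin{pmatrix} a(x) & b(x) \\ c(x) & d(x)\end{pmatrix}$ with $A(x)\in\GL_2(\C)$ for all $x$; then $A_{\hat v}(x)=[a(x):c(x)]$, i.e.\ the map $x\mapsto a(x)/c(x)\in\Su^2$.

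First I would choose a concrete degree-one map $g\colon\T^2\to\Su^2$ and matrix entries realizing it. For instance, one can collapse $\T^2$ onto $\Su^2$ in a degree-one fashion and write $a(x)/c(x) = g(x)$ away from the poles; a clean algebraic choice is to use the two coordinates $x_1,x_2$ to build $a(x) = e^{2\pi i x_1} - 1$ and $c(x)$ chosen so that the ratio winds once — but since we need $A(x)$ invertible everywhere, the cleanest route is to take $A(x)$ diagonal-plus-correction form. Actually the most economical construction: let $f\colon\T^2\to\C$ be a smooth function whose zero set is a single nondegenerate zero and set $A(x) = \begin{pmatrix} f(x) & -1 \\ 1 & 0 \end{pmatrix}$, which has determinant $1$, hence lies in $\SL_2(\C)\subset\GL_2(\C)$ for all $x$, and $A_{\hat v}(x) = [f(x):1] = f(x)\in\Su^2$. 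The degree of $x\mapsto f(x)$ as a map $\T^2\to\Su^2$ equals the signed count of preimages of a generic point; choosing $f$ so that $\infty$ (the point at infinity, not attained by $f$) and some generic finite value each have the right count, one arranges $\deg f = 1\ne 0$. A standard such $f$: pull back a degree-one map $\Su^2\to\Su^2$ along a degree-one map $\T^2\to\Su^2$ (the latter exists since $\T^2$ and $\Su^2$ are both closed oriented surfaces and one may pinch a disc).

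Then I would invoke Corollary~\ref{coro T2 GL2}: since $A_{\hat v}$ has nonzero degree it is not homotopic to a constant, so every continuous $B$ homotopic to $A$ — in particular every quasi-periodic cocycle in the homotopy class of $A$, whose projectivization $B_{\hat v}$ is then homotopic to $A_{\hat v}$ and hence also of nonzero degree — fails to have dominated splitting. This gives the claimed $A$ whose entire homotopy class contains no quasi-periodic cocycle with dominated splitting.

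The main obstacle is producing an \emph{explicit, manifestly continuous and everywhere-invertible} $2\times 2$ complex matrix function on $\T^2$ whose projectivization has nonzero degree, rather than just asserting abstract existence. The $\begin{pmatrix} f & -1 \\ 1 & 0\end{pmatrix}$ trick disposes of the invertibility issue for free (determinant identically $1$), so the remaining work is purely to exhibit a smooth $f\colon\T^2\to\Su^2$ of degree one and to verify the degree computation; this is elementary algebraic topology (e.g.\ a pinch map, or writing $f$ in coordinates and computing $\int_{\T^2} f^\ast\omega_{\Su^2}$), and presents no real difficulty beyond bookkeeping. One should also note, for completeness, that such $A$ need not itself be analytic — the corollary only asserts that its homotopy class is devoid of analytic (quasi-periodic) dominated cocycles — which is exactly what is wanted and matches the phrasing of the statement.
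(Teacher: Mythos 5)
Your overall strategy is the same as the paper's: exhibit an explicit $A\colon\T^2\to\GL_2(\C)$ for which some projectivization $A_{\hat v}\colon\T^2\to\Pp(\C^2)\cong\Su^2$ has nonzero degree, then invoke Corollary~\ref{coro T2 GL2} and the homotopy invariance of degree to kill off the whole homotopy class. However, your concrete construction has a genuine topological flaw.

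You propose $A(x)=\begin{pmatrix} f(x) & -1 \\ 1 & 0\end{pmatrix}$ with $f\colon\T^2\to\C$ continuous, so that $A_{\hat v}(x)=[f(x):1]$. But a $\C$-valued $f$ never hits $\infty=[1:0]$, so $A_{\hat v}$ misses a point of $\Su^2$; any continuous map from a compact space into $\Su^2$ that misses a point factors through the contractible space $\Su^2\setminus\{\text{pt}\}\cong\R^2$ and is therefore nullhomotopic, hence of degree zero. Your remark that ``choosing $f$ so that $\infty$ \ldots and some generic finite value each have the right count'' cannot work: degree is well-defined independently of the regular value, and at the unattained value $\infty$ the signed preimage count is $0$. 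Likewise, pulling back along a degree-one map $\T^2\to\Su^2$ produces a $\Su^2$-valued function, not a finite $\C$-valued one, so it cannot be plugged into the $(1,1)$-entry of your matrix without breaking down at the points where it equals $\infty$. The $\det=1$ trick disposes of invertibility, but at the cost of pinning the $(2,1)$-entry to the nonvanishing constant $1$, which is exactly what forces $A_{\hat v}$ to avoid $\infty$.

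The paper circumvents this by writing a surjective degree-nonzero $\phi\colon\T^2\to\Pp(\C^2)$ as a ratio $\phi=a/b$ of two real-analytic functions $a,b\colon\T^2\to\C$ that never vanish simultaneously (but each vanishes somewhere), and then using the matrix
$A(x)=\begin{pmatrix} a(x) & -\overline{b(x)} \\ b(x) & \overline{a(x)}\end{pmatrix}$,
which has $\det A = |a|^2+|b|^2>0$ and hence lies in $\GL_2(\C)$, while $A_{\hat v}(x)=[a(x):b(x)]=\phi(x)$ attains $\infty$ at the zeros of $b$. Any fix of your argument must similarly let the second entry of the first column of $A$ vanish somewhere while maintaining invertibility elsewhere; a constant entry cannot do this.
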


\begin{proof}
Consider any analytic map $f\colon\T^2\to\R^3\setminus\{0\}$. Let $p\colon \R^3\setminus\{0\}\to\Pp(\C^2)$ be the composition of the projection 
$$\R^3\setminus\{0\} \ni (x,y,z)\mapsto \frac{(x,y,z)}{\norm{(x,y,z)}}\in\Su^2$$
onto the unit sphere $\Su^2$ with the stereographic projection, which maps $\Su^2$  diffeomorphically onto
 the projective space $\Pp(\C^2)=\C\cup \{\infty\}$.

\medskip

Assume that the  origin belongs to a bounded connected component of $\R^3\setminus f(\T^2)$.
Then the parametric hypersurface $f$ has non zero winding number around $0$, which implies that the composition $\phi=p\circ f\colon \T^2 \to \Pp(\C^2)$ has non zero degree. 

\medskip

Write $\phi=a/b$ as the ratio  of two real
analytic functions $a,b\colon \T^2\to \C$, where $b$ vanishes exactly at the points $x\in\T^2$ where $\phi(x)=\infty$ and the pair $(a(x),b(x))\neq (0,0)$
for all $x\in\T^2$. Then the analytic function
$A\colon\T^2\to\GL_2(\C)$ 
$$ A(x)=\begin{pmatrix}
a(x) & -\overline{b(x)} \\
b(x) & \overline{a(x)}
\end{pmatrix}$$
satisfies the assumption of Corollary~\ref{coro T2 GL2}
with $\hat v=\widehat{(1,0)}$.
Hence it cannot have dominated splitting.

\medskip

Finally, if $B\colon \T^2\to\GL_2(\C)$ is another 
continuous function homotopic to $A$ then the functions
$A_{\hat v} \colon \T^2 \to \Pp(\C^2)$, 
$A_{\hat v}(x)=A(x)\hat v$,
and $B_{\hat v}\colon \T^2 \to \Pp(\C^2)$,
$B_{\hat v}(x)=B(x)\hat v$, are also homotopic.
Hence $B_{\hat v}$ is not homotopic to a constant
and by  Corollary~\ref{coro T2 GL2} the cocycle
$B$ cannot have dominated splitting either.
\end{proof}

Theorem~\ref{teor main} follows from the following proposition.

\medskip

\begin{proposition}
\label{prop No DS d>1}
Given integers $d\geq 2$ and $1\leq k <m$
there exist analytic quasi-periodic cocycles
$A\colon \T^d\to \GL_m(\C)$ with an invariant 
measurable decomposition $\C^m=E^+\oplus E^-$
such that
\begin{enumerate}
\item $\dim E^+=k$,
\item all Lyapunov exponents of $A\vert_{E^+}$ are positive,
\item all Lyapunov exponents of $A\vert_{E^-}$ are negative,
\item no continuous cocycle $B\colon \T^d\to \GL_m(\C)$ in the homotopy class of  $A$ admits a continuous invariant section $\sigma\colon \T^d\to\Gr_k(\C^m)$.
\end{enumerate}   
\end{proposition}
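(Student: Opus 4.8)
The plan is to reduce to a two-dimensional base and then realise the cocycle as a constant hyperbolic matrix times a topologically non-trivial factor, so that the obstruction of Proposition~\ref{coro Td GLm} survives in the whole homotopy class while the Oseledets data of the required signature is created by the hyperbolic factor.

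\emph{Reduction to $d=2$.} It suffices to build an analytic $\tilde A\colon\T^2\to\GL_m(\C)$, over an ergodic translation of $\T^2$, which has properties (1)--(3) and for which $\tilde A_V\colon\T^2\to\Gr_k(\C^m)$ induces a non-zero map on $H_2(\cdot,\F)$ for some $V\in\Gr_k(\C^m)$ and some field $\F$. Indeed, if $\pi_{12}\colon\T^d\to\T^2$ is the projection onto the first two coordinates, then $A:=\tilde A\circ\pi_{12}$, over an ergodic translation of $\T^d$ whose first two components generate the chosen translation of $\T^2$, has iterates pulled back from those of $\tilde A$, hence the same Lyapunov exponents and the same (pulled back) measurable decomposition $\C^m=E^+\oplus E^-$; and $A_V=\tilde A_V\circ\pi_{12}$ with $(\pi_{12})_\ast$ surjective on $H_2$, so $(A_V)_\ast\neq0$, whence by Proposition~\ref{coro Td GLm} --- applied to every continuous cocycle $B$ in the homotopy class of $A$, for which $B_V$ is homotopic to $A_V$ and thus $(B_V)_\ast\neq0$ --- no such $B$ admits a continuous invariant section into $\Gr_k(\C^m)$. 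That is precisely (4).

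\emph{The two-dimensional cocycle.} Starting from an analytic $A_0\colon\T^2\to\GL_2(\C)$ as in the proof of Corollary~\ref{coro T2 GL2 no DS}, for which $(A_0)_{\hat v}\colon\T^2\to\Pp(\C^2)$ is not null-homotopic for some $\hat v\in\Pp(\C^2)$, one transplants this topology into $\Gr_k(\C^m)$ via a linear embedding $\iota\colon\Pp(\C^2)\hookrightarrow\Gr_k(\C^m)$, $\ell\mapsto\ell\oplus W_0$, where $W_0$ is a fixed $(k-1)$-plane; such an $\iota$ carries the generator of $H_2(\Pp(\C^2),\F)$ onto a generator of $H_2(\Gr_k(\C^m),\F)$ (the class of a projective line in the Pl\"ucker embedding). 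One then builds $\tilde A$ of the form $D\cdot W$, with $D$ a constant matrix and $W\colon\T^2\to\GL_m(\C)$ of unitary type, chosen so that for a suitable $V$ the map $\tilde A_V$ is homotopic to $\iota\circ(A_0)_{\hat v}$; since $D$ lies in the connected group $\GL_m(\C)$ it is homotopic to the identity and does not alter the homotopy class of $\tilde A_V$, so $(\tilde A_V)_\ast\neq0$, which by the reduction step gives (4). At the same time $D$ is taken hyperbolic, with the gaps in its spectrum placed so that the Lyapunov exponents of $\tilde A$ split into exactly $k$ positive and $m-k$ negative values; then Oseledets' theorem furnishes the measurable invariant decomposition $\C^m=E^+\oplus E^-$ with $\dim E^+=k$ and the sign conditions (1)--(3).

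\emph{The main obstacle.} The crux is to make the signature $\dim E^+=k$ coexist with (4): no \emph{continuous} invariant $k$-plane may survive, which forbids $\tilde A$ from splitting continuously into a $k$-block carrying the positive exponents and an $(m-k)$-block carrying the negative ones; equivalently the positive Oseledets subspace must genuinely entangle the topologically non-trivial directions of $W$, so that it cannot be made continuous. Producing such a configuration --- a hyperbolic Oseledets splitting of the prescribed signature whose positive part is not continuous, carried by a cocycle whose induced map into $\Gr_k(\C^m)$ is homologically non-trivial --- is the delicate point. The extremal cases $k=1$ and $k=m-1$, in which the $2$-dimensional topological block must itself contribute a Lyapunov exponent of each sign, are the ones that require the most care: there one must replace the conformal cocycle of Corollary~\ref{coro T2 GL2 no DS} (whose two exponents coincide) by a cocycle of the same homotopy type but with a genuine spectral gap across $0$, and check that this gap is not destroyed by the topological twisting.
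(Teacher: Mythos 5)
Your overall strategy is the same as the paper's: reduce to a $\T^2$-supported block, transplant the $\Pp(\C^2)$ obstruction into $\Gr_k(\C^m)$ via the linear embedding $\ell\mapsto\ell\oplus W_0$ (which is exactly the Schubert cycle $\Sigma=\{V:V_{k-1}\subset V\subset V_{k+1}\}$ used in the paper), and use connectedness of $\GL_m(\C)$ to keep the homotopy class under control. The reduction to $d=2$ and the derivation of item~(4) from Proposition~\ref{coro Td GLm} are clean and correct.

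There is, however, a genuine gap, concentrated exactly where you label things ``the delicate point.'' Two issues. First, the proposed form $\tilde A = D\cdot W$, with $D$ constant hyperbolic and $W$ a ``unitary-type'' cocycle, does \emph{not} make the Lyapunov exponents of $\tilde A$ controllable by the spectral gaps of $D$, contrary to what the proposal asserts; the exponents of a product of a constant diagonal with a unitary cocycle depend on the dynamical interaction of the two and are not simply the logarithms of the singular values of $D$. The paper sidesteps this entirely by taking $\tilde A$ \emph{block diagonal}: $\tilde A(x)=\diag(\lambda I_{k-1},\,A(\pi(x)),\,\mu I_{m-k-1})$, so the Lyapunov spectrum decomposes additively across blocks and the count of positive and negative exponents is transparent. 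Second, and more fundamentally, you must force the $2\times2$ middle $\SL_2(\C)$ block to have one strictly positive and one strictly negative exponent, but the explicit cocycle of Corollary~\ref{coro T2 GL2 no DS} is conformal with equal exponents. You correctly flag that a perturbation within the homotopy class is needed to open a gap across zero, but you leave it unresolved. The missing ingredient is \'Avila's density theorem \cite[Theorem~1]{A11}: an arbitrarily small analytic perturbation in the same homotopy class yields an $\SL_2(\C)$ cocycle with non-zero Lyapunov exponent. Without citing this (or an equivalent result) the construction does not close.

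A smaller misstatement: you claim the extremal cases $k=1$ and $k=m-1$ are the ones where the $2\times2$ block must carry one exponent of each sign. In fact this is required in \emph{every} case: if the $2\times2$ block contributed two exponents of the same sign, the $k$-dimensional Oseledets subspace $E^+$ of the block-diagonal cocycle would be a constant coordinate $k$-plane, hence a continuous invariant section of $A$ itself, contradicting~(4).
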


\begin{proof}
Consider an analytic quasi-periodic cocycle $A\colon \T^2\to \SL_2(\C)$ in the homotopy class of a cocycle 
given by Corollary~\ref{coro T2 GL2 no DS}.
By~\cite[Theorem 1]{A11}, possibly perturbing it,
 we can assume that $A$ admits an invariant 
measurable decomposition $\C^m=E^+\oplus E^-$
with $\dim E^- = \dim E^+=1$ and having non-zero Lyapunov exponents,
w.r.t. an ergodic translation with frequency vector $\omega\in\T^2$. 

Take positive numbers $\lambda>1>\mu$ such that
$\lambda^{k-1}\,\mu^{m-k-1}=1$ and let $\tilde{A}\colon \T^d\to \SL_m(\C)$ be the cocycle
$$ \tilde{A}(x):=
\begin{pmatrix}
\lambda\, I_{k-1} & 0 &  0\\
0 & A(\pi(x)) & 0 \\
0 & 0 & \mu\,I_{m-k-1} 
\end{pmatrix} \in \SL_m(\C) $$
where $I_{k-1}$ and $I_{m-k-1} $ stand for identity matrices of the specified dimensions, and $\pi\colon \T^d\to \T^2$ denotes the projection   $\pi(x_1,\ldots, x_d):=(x_1,x_2)$. By construction the cocycle $\tilde{A}\colon \T^d\to \SL_m(\C)$ satisfies properties (1)-(3), w.r.t.  any ergodic translation with frequency vector $\tilde{\omega}\in\T^d$ such that $\pi(\tilde{\omega})=\omega$.

We are going to use Proposition~\ref{prop no splitting} to prove item (4). For each $1\leq i\leq m$, let $V_i\in\Gr_k(\C^m)$ be the complex $i$-plane generated by the first $i$ vectors $e_1,\ldots, e_i$ of the  canonical basis of $\C^m$.
We claim that the map $\tilde A_{V_k}\colon \T^d\to \Gr_k(\C^m)$,
$\tilde A_{V_k}(x):=\tilde{A}(x)\, V_k$, induces a non zero linear map
$(\tilde A_{V_k})_\ast\colon H_2(\T^d,\K) \to H_2(\Gr_k(\C^m),\K)$. By construction this is true about the map $A_{e_1}\colon \T^2\to \Gr_k(\C^2)$,
$A_{e_1}(x):=A(x)\, \hat e_1$, which induces a non zero linear map
 at the second homology level.
To relate the homologies of $\tilde{A}_{V_k}$ and $A_{e_1}$
we  factor the first, $\tilde{A}_{V_k}$, as a composition of several maps which include the second, $A_{e_1}$.

Let $\Sigma:=\{V\in\Gr_k(\C^m)\,\colon \,
V_{k-1}\subset V\subset V_{k+1}\,\}$. This is a complex analytic submanifold
of the Grassmannian space $\Gr_k(\C^m)$, which is diffeomorphic to the complex projective line  $\Pp(\C^2)$.
Let $p\colon \C^m\to \C^2$ be the linear projection
$p(z_1,\ldots, z_m)=(z_{k},z_{k+1})$, and define 
$H\colon \Pp(\C^2)\to \Sigma$ by $H(\hat v):= V_{k-1} + \C\,p(v)$.
Then for all $x\in\T^d$,
\begin{align*}
\tilde{A}_{V_k}(x)&= \tilde{A}(x)\, V_k =
V_{k-1} + \C\,p(A(\pi(x))\, e_1) \\
&=H( A_{e_1}(\pi(x)))=(\iota\circ H\circ A_{e_1}\circ \pi)(x)
\end{align*}
where $\iota$ stands for the inclusion map $\iota\colon \Sigma\to \Gr_k(\C^m)$.

By K\"unneth theorem, the linear map
$\pi_\ast\colon H_2(\T^d,\F)\to H_2(\T^2,\F)$ is surjective. 
Because $H$ is a diffeomorphism, the homology map $H_\ast$ is an isomorphism. We are left to prove that 
$\iota_\ast \colon  H_2(\Sigma,\K)\to H_2(\Gr_k(\C^m),\K)$
is  injective. This will imply that
$(\tilde{A}_{V_k})_\ast$ is non zero and, by Proposition~\ref{prop no splitting}, that  no cocycle homotopic to 
$\tilde A$ admits a continuous invariant section with values in $\Gr_k(\C^m)$. 

Let us now turn to prove the injectivity of
$\iota_\ast$. The Grassmannian $\Gr_k(\C^m)$ is an analytic manifold of dimension $k\, (m-k)$. By Schubert Calculus (see~\cite[Section 3.2]{Nicolaescu} or~\cite[Section 5 of Chapter 1]{Harris-Griffiths}), the manifold
$\Gr_k(\C^m)$ admits a class of standard cell decompositions, whose cells are referred as {\em Schubert cells}. The closures of these cells  are analytic subvarieties known  as {\em Schubert cycles}.
The submanifold $\Sigma$ is itself  a Schubert cycle with complex dimension $1$ which can be integrated in a cell decomposition
$$ \{V_k\}=\Sigma_0 \subset \Sigma =\Sigma_1 \subset \Sigma_2 \subset \cdots \subset \Sigma_{N}=\Gr_k(\C^m) . $$
Each space $\Sigma_{i}$ is an analytic subvariety obtained from $\Sigma_{i-1}$ by joining a cell with (real) even dimension and boundary contained in $\Sigma_{i-1}$. This implies that
$H_1(\Sigma_i,\Sigma_{i-1},\K)=0$ for all $i$ and all fields $\K$. Hence, by the long exact sequence of the pair $(\Sigma_i,\Sigma_{i-1})$,
$$ 0= H_1(\Sigma_i,\Sigma_{i-1},\K)\to H_2(\Sigma_{i-1},\K)
\to H_2(\Sigma_{i},\K) \to \cdots $$
is an exact sequence. Therefore, because $\iota$ can be factored
as the composition of the inclusions
$\Sigma_{i-1} \hookrightarrow \Sigma_i$ with $i=2,\ldots, N$,
the map $\iota$ is injective at the second homology level.
\end{proof}

\begin{remark}\label{related to continuity LE}
Given a cocycle $A\in C^{\om}_{r} (\T^d, \GL_m(\R))$ in one of  the homotopy classes from Proposition~\ref{prop No DS d>1},
the cocycle $A_y:\T^d\to \GL_m(\C)$, $A_y(x)=A(x+i y)$,
 {\em cannot} have dominated splitting for any $y\in\R^d$.
\end{remark}

\subsection*{Acknowledgments}

The first author was supported  by Funda\c{c}\~{a}o para a Ci\^{e}ncia e a Tecnologia, under the project: UID/MAT/04561/2013.

The second author was supported by the Norwegian Research Council project no. 213638, ``Discrete Models in Mathematical Analysis'' and by the Conselho Nacional de Desenvolvimento Cient\'{i}fico e Tecnol\'{o}gico (CNPq, Brazil).

\def\cprime{$'$} \def\cprime{$'$}
\providecommand{\bysame}{\leavevmode\hbox to3em{\hrulefill}\thinspace}
\providecommand{\MR}{\relax\ifhmode\unskip\space\fi MR }
% \MRhref is called by the amsart/book/proc definition of \MR.
\providecommand{\MRhref}[2]{%
  \href{http://www.ams.org/mathscinet-getitem?mr=#1}{#2}
}
\providecommand{\href}[2]{#2}

\end{document}